\newcommand{\R}{{\Bbb R}}
\newcommand{\N}{{\Bbb N}}
\newtheorem{theorem}{Theorem}[section]
\newtheorem{lemma}[theorem]{Lemma}
\newtheorem{cor}[theorem]{Corollary}
\theoremstyle{definition}
\theoremstyle{remark}
\newtheorem{remark}[theorem]{Remark}
\numberwithin{equation}{section}
\begin{document}

\title[Wavefront stability with asymptotic phase]{Wavefront's stability with asymptotic phase in the delayed monostable equations}

\author{Abraham Solar}
\address{DMFA, Universidad Católica de la Santísima Concepción, Concepción, Chile}
\email{asolar@ucsc.cl}

\thanks{This work was supported by FONDECYT  (Chile),   projects  11190350 (A.S.), 1190712 (S.T.).}

\author{Sergei Trofimchuk}
\address{Instituto de Matem\'atica, Universidad de Talca, Casilla 747, Talca, Chile}
\email{trofimch@inst-mat.utalca.cl}

\subjclass[2020]{Primary 35C07, 35R10; Secondary 35K57}

\date{July 24, 2021 and, in revised form... }


\keywords{Monostable  equation, delay, traveling front,  non-monotone response}

\begin{abstract} We extend the class of initial conditions for scalar delayed reaction-diffusion equations $u_t (t,x)=u_{xx}(t,x)+f(u(t, x), u(t-h, x))$  which evolve in solutions converging to monostable traveling waves. Our approach allows to compute, in the moving reference frame, the phase distortion $\alpha$ of the limiting travelling wave with respect to the position of solution at the initial moment  $t=0$. In general,  $\alpha\not=0$ for the Mackey-Glass type diffusive equation.  Nevertheless, $\alpha=0$ for the KPP-Fisher delayed equation: the related theorem also improves existing stability conditions for this model.

\end{abstract}

\maketitle

\section{Introduction: main results and applications}

The previous studies (e.g. see \cite{BS,Chern,LvW,wlr}) show that both minimal and non-minimal positive traveling waves\footnote{By definition, the profile  $\phi$ should satisfy  $ \phi(-\infty)=0, 
$ $\liminf_{t \to +\infty}\phi(t)>0$,  $\sup_{t \in \R} \phi(t) <\infty$.} $u(t,x) = \phi(x+ct)$ for the monostable delayed reaction-diffusion equation 
\begin{equation}\label{E} 
u_t (t,x)=u_{xx}(t,x)+f(u(t, x),u(t-h, x)), \quad t>0, \  x\in{\mathbb R},
\end{equation}
attract solutions\footnote{We assume everywhere that  (i) $u_0(s,x)$ is bounded, globally Lipschitz  continuous in $x$  (uniformly in $s$) and (ii)  the solution 
$u(t, x)$  exists globally and is bounded on the strips $[0,n]\times \R, n \in \N$. Note that (ii) is satisfied automatically for both models (KPP-Fisher and Nicholson's) of the paper.} $u(t,x)$ whose initial segments $u_0(s, x)$  have the same leading  asymptotic terms  
at $x = -\infty$ as the shifted wave  $\phi(x+cs)$, for all $s\in[-h, 0]$. The latter assumption implies that, for some positive $A_0$, 
\begin{equation}\label{EC}
\lim_{x \to -\infty} \frac{u_0(s,x)}{\phi(x+cs)}=A_0, \quad s \in [-h,0]. 
\end{equation}

This observation concerns so-called pulled waves for equation (\ref{E}) and 
smooth traveling waves for delayed degenerate reaction-diffusion equations \cite{HMD}. The pushed and bistable waves have better stability properties  \cite{SOTRb,SOTRa} and they are not considered in this work. 

Condition (\ref{EC}) seems to be excessively restrictive: for example, 
it excludes  initial segments asymptotically similar, in the spirit of (\ref{EC}),  to $\phi(x+\alpha(s)),$ $ s\in[-h, 0]$, with  nonlinear shift $\alpha (s)$.  This circumstance  is irrelevant for the non-delayed equations when $h=0$, however, in the delayed case it restricts severely the range of possible applications. 
Analysing this problem,  in \cite[Corollary 1]{SOTRb}  we have shown, under a quasi-monotonicity condition on $f$,  that the existence of the limit 
\begin{equation}\label{ECM}
\lim_{x \to -\infty} \frac{u_0(s,x)}{\phi(x+cs)}=A_0(s)>0, \quad s \in [-h,0], 
\end{equation}
with some continuous function $A_0(s)$ implies that solution $u(t,x)$ evolves  in the middle of two shifted traveling waves constituting  the lower bound $u_-(t,x)=\phi(x+ct+a_-),$ and the upper bound $u_+(t,x)=\phi(x+ct+a_+)$. Condition (\ref{ECM}) is easily verifiable. Indeed,   it is well known  that under some natural restrictions (tacitly assumed in this work) so-called non-critical waves have the following asymptotic 
representation after an appropriate translation of the time variable:
\begin{eqnarray}\label{t}
\phi(t)&=&e^{\lambda_1 t}+e^{(\lambda_1+\sigma)t} r_1(t), \ \lambda_1+\sigma < \lambda_2, \\
\phi'(t)&=&\lambda_1e^{\lambda_1 t}+e^{(\lambda_1+\sigma)t} r_2(t), \quad\quad t\in {\mathbb R}.\nonumber
\end{eqnarray}
Here $\sigma$ is a positive number, $r_1, \, r_2$ are smooth bounded functions  and  $0<\lambda_1< \lambda_2$ are   zeros 
of the characteristic function $\chi_0(z)= z^2-cz+f_1(0,0)+f_2(0,0)e^{-zch}$. In the paper, $f_j(u,v)$ denotes the partial derivative 
of $f$ with respect to $j$-th argument. We will assume that $f_j(u,v)$ are locally Lipschitz  continuous functions.

A potential possibility that  solution $u(t,x)$ can develop non-decaying oscillations between the waves $u_+(t,x)$ and $u_-(t,x)$ was 
not discarded in  \cite{SOTRb}. Another question left open in  \cite{SOTRb}   is whether such $u(t,x)$ converges to the traveling wave in form and in speed \cite{KPP,UC}, i.e. whether  there exists a function $\beta(t)$ such that $\beta(t)/t \to c$ and $u(t, x-\beta(t)) \to \phi(x)$ as $t \to +\infty$, uniformly on subsets $(-\infty, n],$ $n \in \N$.
In this work, we answer both questions under rather realistic assumptions specified below.

Actually, assuming (\ref{ECM}), we prove that the solution  $u(t,x)$ converges to a shifted wave  $\phi(x+ct+a_*)$, where $a_*$ 
is completely determined by the function $A_0(s)$: 
\begin{equation}\label{as}
a_*=\frac{1}{\lambda_1}\ln A_{\infty}, \quad \mbox{where} \ A_{\infty}:= \frac{A_0(0)+q\int_{-h}^0A_0(s)ds}{1+qh}, \ q := \frac{f_2(0,0)e^{-\lambda_1ch}}{\lambda_1}.
\end{equation}
We obtain $A_{\infty}$ as the limit value at $+\infty$ of the solution $A(t), \ t\geq 0$,  to the initial value problem $A(s) = A_0(s) >0,$  $s \in [-h,0]$, for the monotone scalar delay differential equation 
\begin{eqnarray}\label{al1}
A'(t)&=&q\left(A(t-h)-A(t)\right),\quad\quad t \geq 0. \label{al2}
\end{eqnarray}
Indeed, it is clear that  $A(t) >0$ for all $t \geq -h$.  Since the characteristic equation  $z+q =  qe^{-zh}$ for equation (\ref{al1}) with $f_2(0,0)>0$ has a unique simple real root $z=0$, other (complex) roots $z_j$
satisfying the inequality ${\mathbb R}e \,z_j < 0$ (see Appendix),  there are real numbers $A_\infty \geq 0$ and $d<0$ (cf. \cite[Theorem 3.2]{BC})  such that 
\begin{equation}\label{be}
|A(t) - A_\infty| \leq e^{dt}, \quad t \geq 0. 
\end{equation}
By integrating (\ref{al2}) on ${\mathbb R}_+$, we find that 
$$
A_\infty(1+qh)=A_0(0)+q\int_{-h}^0A_0(s)ds >0. 
$$
Now, (\ref{ECM}),  (\ref{t}) imply that the initial function $u(s,x)$ evaluated at the moment $s=0$ behaves as  $\phi(x+ a_0),$ where $a_0= \ln A_0(0)/\lambda_1$. Therefore  the total traveled distance $\delta_a $ between the initial (at the moment $t=0$) and final (as $t \to +\infty$) positions of the solution in the moving reference frame  is
$$
\delta_a= a_*-a_0=\frac{1}{\lambda_1}\ln  \frac{1+q\int_{-h}^0A_0(s)/A_0(0)ds}{1+qh}. 
$$
Note that the function $A(t)$ and $\delta_a$ are completely determined by the speed $c$, the initial values $A_0(s)$  and the partial derivatives $f_1(0,0), f_2(0,0)$. They do not depend on other characteristics of  solution $u(t,x)$ and wavefront  $\phi(x+ct)$, including their bounds $M_1\leq M_3 \in \R \cup \{+\infty\},$ $M_2\leq 0$, 
$$
0\leq \phi(x)\leq M_1, \quad M_2\leq u(t, x)\leq M_3, \qquad  (t, x)\in[-h, +\infty)\times{\mathbb R}, 
$$
and associated parameters $L_2\geq f_2(0,0) \geq 0$ and $D \in \R$ chosen to satisfy 
$$
|f(w, v_1)-f(w, v_2)|\leq L_2 \, |v_1-v_2|, \quad \quad (w, v_1, v_2)\in [0, M_1]\times[M_2, M_3]^2,
$$
$$
D=\inf_{(w_1,w_2, v)\in[M_2, M_3]^3, w_1\not=w_2} \frac{f(w_1, v)- f(w_2,v)}{w_2-w_1}. 
$$
\begin{remark} \label{R11} Clearly,  $D= 1$ for the Mackey-Glass type nonlinearity $f(w,v)= -w + b(v)$.  Considering monotone wavefronts for the KPP-Fisher delayed equation \cite{BS,ADN,FZ,fhw}, when $f(w,v)= w(1-v)$, we find that 
$L_2= M_1=1$, $M_3=+\infty$. In the general case of non-monotone waves for the latter equation,  we can take $L_2= M_1=e^{ch}$, $M_3=+\infty$, cf. \cite{BS}. 
In both cases (monote and non-monotone), we  have that $D=\inf_{(t,x)\in[0, +\infty)\times{\mathbb R}} u(t, x)-1.$ 
Hence, if $u_0\geq 0=M_2$ then   $D=-1$.
\end{remark}
First, we consider  an easier situation when $f_2(0,0) >0$.
\begin{theorem} \label{Th1} Assume  that $f_2(0,0) >0$ and 
\begin{eqnarray}\label{in1}
\lambda^2-c\lambda -D-\gamma+L_2 e^{-\lambda ch} e^{-\gamma h} <0,
\end{eqnarray}
for some  $\lambda\in (\lambda_1, \min\{2\lambda_1,\lambda_2\})$  and $\gamma \in (d,0)$.  If, in addition,  
$u_0(s,x)$ verifies
\begin{eqnarray}\label{IC1}
|u_0(s,x)-\phi(x+cs+\alpha_0(s))|\leq  Ke^{\lambda x},\qquad  (s,x)\in[-h,0]\times{\mathbb R},
\end{eqnarray}
then, for some $K'\geq K$, solution $u(t,x)$ of  (\ref{E}) with the initial function $u_0$ satisfies 
\begin{eqnarray}\label{conv}
\sup_{x \in \R}\left(e^{-\lambda x}|u(t,x-ct-\alpha(t))-\phi(x)|\right)\leq K' e^{\gamma t},\quad  t\geq -h.
\end{eqnarray}
Here $A(t)=e^{\lambda_1\alpha(t)}$ solves 
(\ref{al1}) with the initial datum $A_0(s)=e^{\lambda_1\alpha_0(s)},$ $s \in [-h,0]$ so that 
$
\alpha(+\infty)=a_* \in [\min_{[-h,0]}\alpha_0(s), \max_{[-h,0]}\alpha_0(s)]
$
is given by (\ref{as}). Finally,  $\delta_a=0$ if and only if $A(0) = (1/h)\int_{-h}^0A_0(s)ds$. 
\end{theorem}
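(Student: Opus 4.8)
The plan is to establish the convergence estimate (\ref{conv}) by a comparison/barrier argument built around the shifted wave $\phi(x+ct+\alpha(t))$, where the shift $\alpha(t)$ is governed precisely by the linear delay equation (\ref{al1}) through the substitution $A(t)=e^{\lambda_1\alpha(t)}$. The key idea is that the leading-order exponential behavior of the wave at $-\infty$, encoded in (\ref{t}), forces the admissible phase shifts to evolve according to (\ref{al1}); so I would first verify that $\phi(x+ct+\alpha(t))$ is an approximate (super/sub) solution of (\ref{E}) up to an error that is controlled by $e^{\lambda x}e^{\gamma t}$, using the spectral inequality (\ref{in1}). The exponent $\lambda\in(\lambda_1,\min\{2\lambda_1,\lambda_2\})$ is chosen so that $e^{\lambda x}$ decays faster than the profile $\phi\sim e^{\lambda_1 x}$ but slower than the second exponential $e^{\lambda_2 x}$ in the asymptotic expansion, which is exactly the window in which the linearization at $0$ dominates and the remainder terms $r_1,r_2$ in (\ref{t}) are negligible.

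**Next I would** set up the weighted-norm contraction. Define the deviation $v(t,x)=u(t,x-ct-\alpha(t))-\phi(x)$ and seek to bound $\sup_x e^{-\lambda x}|v(t,x)|$. Writing the equation satisfied by $v$ in the moving frame, the nonlinearity $f$ splits into its linearization at $0$ plus a remainder controlled by the Lipschitz constant $L_2$ and the constant $D$; the delay term contributes a factor $e^{-\lambda ch}e^{-\gamma h}$ after passing to the weighted norm, and the diffusion plus drift contribute $\lambda^2-c\lambda$. Assembling these, the instantaneous growth rate of the weighted norm is bounded by the left-hand side of (\ref{in1}) shifted by $\gamma$, so condition (\ref{in1}) guarantees that $e^{-\gamma t}\sup_x e^{-\lambda x}|v(t,x)|$ is nonincreasing (or at least bounded), yielding (\ref{conv}) with an appropriate $K'\ge K$. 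The role of $\gamma\in(d,0)$ is to absorb the algebraic-in-$t$ decay of $\alpha(t)-a_*$ coming from (\ref{be}): since $A(t)$ approaches $A_\infty$ like $e^{dt}$, the phase $\alpha(t)=\lambda_1^{-1}\ln A(t)$ converges to $a_*$ at rate $e^{dt}$, and choosing $\gamma>d$ keeps the phase error subordinate to the main exponential rate.

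**The identification of the limit** $\alpha(+\infty)=a_*$ and the location of $a_*$ in the interval $[\min\alpha_0,\max\alpha_0]$ follows directly from the analysis of (\ref{al1}) already carried out in the excerpt: $A_\infty$ is the limit of the monotone-semiflow solution $A(t)$, and since (\ref{al1}) preserves the order interval bounded by the constant steady states $\min A_0$ and $\max A_0$, the limit $A_\infty$ lies between them, giving the claimed bracketing for $\alpha(+\infty)$ after taking $\lambda_1^{-1}\ln(\cdot)$. The final equivalence, that $\delta_a=0$ iff $A(0)=(1/h)\int_{-h}^0 A_0(s)\,ds$, is a direct computation from the formula for $\delta_a$: setting the argument of the logarithm equal to $1$ gives $q\int_{-h}^0 A_0(s)/A_0(0)\,ds=qh$, i.e. $\int_{-h}^0 A_0(s)\,ds=hA_0(0)=hA(0)$, which rearranges to the stated mean-value condition.

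**The main obstacle** I expect is making the approximate-solution estimate rigorous uniformly in $x$ across the whole line: near $x=-\infty$ the dynamics is genuinely linear and (\ref{t}) gives sharp control, but one must check that the crude global bounds ($M_1,M_2,M_3$, $L_2$, $D$) suffice to close the comparison on the bounded and right-hand regions where $\phi$ is no longer exponentially small. In particular, the delay term $u(t-h,\cdot)$ evaluated at the shifted argument couples the phase history $\alpha(t-h)$ to $\alpha(t)$, and it is the precise matching of this coupling with the delay structure of (\ref{al1}) — rather than an ad hoc shift — that makes the error terms cancel to leading order. Verifying that the constant $q=f_2(0,0)e^{-\lambda_1 ch}/\lambda_1$ arising from differentiating the leading term $e^{\lambda_1(x+ct+\alpha(t))}$ of the wave is exactly the coefficient appearing in (\ref{al1}) is the crux that ties the whole construction together.
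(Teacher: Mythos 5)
Your proposal is correct and follows essentially the same route as the paper: your ``approximate solution'' step is exactly the paper's Lemma \ref{L2} (the residual of the shifted wave cancels to leading order precisely because $A(t)=e^{\lambda_1\alpha(t)}$ solves (\ref{al1}) with $q=f_2(0,0)e^{-\lambda_1 ch}/\lambda_1$, which you correctly identify as the crux), and your weighted-norm closing step corresponds to the paper's barrier functions $\delta_{\pm}(t,x)=\pm[v(t,x)-\phi(x+\alpha(t))]-Qe^{\gamma t}e^{\lambda x}$ controlled via the Phragm\`en--Lindel\"of principle on successive strips of width $h$, with (\ref{in1}) closing the estimate once the residual constant $q_0$ is absorbed by taking $Q$ large. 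Your arguments for the bracketing of $a_*$ and for the criterion $\delta_a=0$ likewise match the paper's.
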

Next, we consider the `degenerate' situation when $f_2(0,0)=0$. From (\ref{al1}), we can expect that $\alpha(t) \equiv \alpha(0)$ 
for $t \geq 0$. Below, we prove that this is indeed the case for a class of the KPP-Fisher type nonlinearities. 

\begin{theorem} \label{Th1a} Assume  that $f(u,v)=g(u)(\kappa -v)$ with $\kappa >0$, $g(0)=0$, that (\ref{in1}) holds
for some  $\lambda\in (\lambda_1, \lambda_2)$   and $\gamma <0$, and that 
 $u_0(s,x)$  satisfies,   for some $\lambda^*> \lambda_1$,  
\begin{eqnarray}\label{IC1a}
|u_0(s,x)-\phi(x+cs+\alpha_0(s))|\leq  Ke^{ \lambda^* x},\qquad  (s,x)\in[-h,0]\times{\mathbb R}.
\end{eqnarray}
Set $\lambda_* =\min\{\lambda^*, \lambda, 2\lambda_1\}$. If 
$\lambda_*^2-c\lambda_* -D-\gamma <0$, 
then, for some $K'\geq K$, solution $u(t,x)$ of  equation (\ref{E}) with the initial function $u_0(s,x)$ satisfies 
\begin{eqnarray}\label{convb}
\sup_{x \in \R}\left(e^{-\lambda_* x}|u(t,x-ct-\alpha_0(0))-\phi(x)|\right)\leq K' e^{\gamma t},\quad  t\geq -h.
\end{eqnarray}
\end{theorem}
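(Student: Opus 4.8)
\emph{Setup and reduction.} I would pass to the wave frame $\xi=x+ct$ and measure the perturbation from the single wave $\psi(\xi):=\phi(\xi+\alpha_0(0))$ selected by (\ref{as}). Since $f(u,v)=g(u)(\kappa-v)$ gives $f_2(0,0)=-g(0)=0$, we have $q=0$ in (\ref{al1}), so $A(t)\equiv A_0(0)$ for $t\ge0$ and the leading coefficients of $V(t,\xi):=u(t,\xi-ct)$ and of $\psi$ agree for $t\ge0$; thus the phase does not move. Writing $W:=V-\psi$, the target (\ref{convb}) is, up to the constant factor $e^{-\lambda_*\alpha_0(0)}$, the bound $\sup_{\xi}e^{-\lambda_*\xi}|W(t,\xi)|\le K'e^{\gamma t}$, i.e. $|W(t,\xi)|\le K'e^{\gamma t+\lambda_*\xi}$; the weight disposes of the right end $\xi\to+\infty$ automatically, so the whole issue is to propagate the leading-edge decay $e^{\lambda_*\xi}$ in time.

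\emph{Perturbation equation and the special structure.} Subtracting the profile equation for $\psi$ from the equation for $V$, and abbreviating $V_h:=V(t-h,\xi-ch)$, $\psi_h:=\psi(\xi-ch)$, $W_h:=W(t-h,\xi-ch)$, one gets
\begin{equation*}
W_t-W_{\xi\xi}+cW_\xi=\bigl[f(V,V_h)-f(\psi,V_h)\bigr]-g(\psi(\xi))\,W_h .
\end{equation*}
For fixed second argument the bracket is controlled by the one–sided constant $D$, so it contributes the effective potential $-D$ to the linear part (in the examples $D=-f_1(0,0)\le0$). The decisive point is the delayed term: its coefficient is exactly $g(\psi(\xi))$, and since $g(0)=0$ and $\psi(\xi)\sim e^{\lambda_1(\xi+\alpha_0(0))}$ as $\xi\to-\infty$ by (\ref{t}), one has $|g(\psi(\xi))|\le L_2$ globally but $|g(\psi(\xi))|=O(e^{\lambda_1\xi})$ at the leading edge. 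Two consequences follow. First, tested against a weight $e^{\lambda_*\xi}$ the delayed term is $O(e^{(\lambda_1+\lambda_*)\xi})$ and is negligible as $\xi\to-\infty$, which is why the delay drops out of the admissibility requirement, leaving the \emph{undelayed} inequality $\lambda_*^2-c\lambda_*-D-\gamma<0$. Second, for $t\in[0,h]$ the factor $W_h$ still carries the initial history on $[-h,0)$, whose phase mismatch $\alpha_0(s)\ne\alpha_0(0)$ makes it $O(e^{\lambda_1\xi})$; multiplied by $g(\psi)=O(e^{\lambda_1\xi})$ this injects a source of order $e^{2\lambda_1\xi}$. To dominate such a source by $e^{\lambda_*\xi}$ one needs $\lambda_*\le2\lambda_1$, which is exactly the nonlinear cap in $\lambda_*=\min\{\lambda^*,\lambda,2\lambda_1\}$.

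\emph{Composite barrier.} I would then seek $\Omega(t,\xi)=K'e^{\gamma t}\bigl(e^{\lambda_*\xi}+B\,e^{\lambda\xi}\bigr)$ with $B$ large and show $\pm W\le\Omega$. Feeding $\Omega$ through the operator, the slow mode $e^{\lambda_*\xi}$ produces the coefficient $\gamma-\lambda_*^2+c\lambda_*+D-g(\psi)e^{-\gamma h-\lambda_*ch}$, which tends to $-(\lambda_*^2-c\lambda_*-D-\gamma)>0$ as $\xi\to-\infty$ by hypothesis and, together with $\lambda_*\le2\lambda_1$, also dominates the $e^{2\lambda_1\xi}$ source there; the fast mode $e^{\lambda\xi}$ produces $\gamma-\lambda^2+c\lambda+D-g(\psi)e^{-\gamma h-\lambda ch}\ge-(\lambda^2-c\lambda-D-\gamma+L_2e^{-\lambda ch}e^{-\gamma h})\ge0$ \emph{everywhere}, since $g(\psi)\le L_2$, by (\ref{in1}). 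Thus the slow mode is a supersolution at the leading edge, where the undelayed parabolic condition governs and the delay is harmless, while the fast mode—sustained by (\ref{in1}) with the full Lipschitz delay—takes over on the bulk and as $\xi\to+\infty$, where $g(\psi)\to g(\kappa)\ne0$; choosing $B$ large lets the nonnegative $e^{\lambda\xi}$ contribution absorb any sign change of the slow coefficient in between. Finally $\lambda_*\le\lambda^*$ gives $e^{\lambda_*\xi}\ge e^{\lambda^*\xi}$ for $\xi\le0$, so $K'\ge K$ makes $\Omega$ dominate the datum $W(0,\cdot)$ from (\ref{IC1a}) at $s=0$, while boundedness of $W$ covers $\xi\ge0$.

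\emph{Comparison and the main obstacle.} Because the delayed coefficient $-g(\psi)$ is nonpositive the $W$–equation is \emph{not} quasimonotone, so I would run a majorant (first–crossing) argument. On $[0,h]$ the delayed argument lies in the prescribed history, where $-g(\psi)W_h$ is a \emph{fixed} source of size $O(e^{2\lambda_1\xi})$ that $\Omega$ dominates once $K'$ is large, thanks to $\lambda_*\le2\lambda_1$. For $t>h$, assuming $|W(s,\cdot)|\le\Omega(s,\cdot)$ for $s<t$, at a first contact point of $|W|$ with $\Omega$ the past bound gives $|{-g(\psi)W_h}|\le g(\psi)\Omega_h$, so the strict supersolution inequality for $\Omega$ is violated—a contradiction. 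Hence $|W|\le\Omega$ for all $t\ge0$. Reading this at the leading edge yields $|W(t,\xi)|\le2K'e^{\gamma t+\lambda_*\xi}$, and $\lambda_*>0$ with $W$ bounded gives $e^{-\lambda_*\xi}|W|\to0$ as $\xi\to+\infty$; together these give (\ref{convb}). I expect the crux to be exactly this interplay in the non-monotone transition zone: where $g(\psi(\xi))$ stops being negligible the slow mode alone is not a supersolution, so one must quantify how large $B$ must be—and how (\ref{in1}) and $\lambda_*^2-c\lambda_*-D-\gamma<0$ jointly cover all $\xi$—to keep the composite a supersolution uniformly while preserving the rate $\lambda_*$ at $-\infty$.
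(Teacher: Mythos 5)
Your proposal is sound in substance and follows the same overall strategy as the paper: pass to the wave frame, use $f_2(0,0)=-g(0)=0$ to freeze the phase at $\alpha_0(0)$, isolate $[0,h]$ where the history's phase mismatch injects an $O(e^{2\lambda_1 x})$ source (correctly identified as the origin of the cap $\lambda_*\le 2\lambda_1$), and then run a supersolution/comparison argument on time strips of length $h$, exploiting that $g(\phi(x))\to 0$ at the leading edge so the undelayed condition $\lambda_*^2-c\lambda_*-D-\gamma<0$ governs there, while (\ref{in1}) with the full constant $L_2$ governs the bulk. The implementations differ in two genuine ways. First, the barrier: the paper glues the two regimes with a single weight $e^{\lambda(x)}$ whose exponent interpolates smoothly from $\lambda_*x$ to $\lambda x$, paying for the transition zone with the curvature condition $\lambda''(x)<-\gamma/4$, a halved rate $e^{0.5\gamma(t-h)}$, and the smallness threshold $|g(\phi(x))|<-0.25\gamma e^{0.5\gamma h}$ for $x\le x_*$; you instead superpose two fixed exponentials $e^{\lambda_*\xi}+Be^{\lambda\xi}$ and buy positivity in the transition zone with a large $B$. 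Since the two modes decouple in the delayed term and $e^{(\lambda-\lambda_*)\xi}$ is bounded below on any half-line $\xi\ge\xi_0$, your constant-chasing does close; this is arguably cleaner, and it also sidesteps the fact (implicit in the paper, justified by convexity) that the quadratic $z^2-cz-D-\gamma$ must be negative on all of $[\lambda_*,\lambda]$, not just at the endpoints. Second, on $[0,h]$ the paper does not use a barrier at all: it invokes Friedman's representation formula and Gaussian kernel estimates to prove $|v(t,x)-\phi(x)|\le K_1e^{\lambda_*x}$ there; your variant (same barrier, history treated as a fixed source of order $e^{\lambda_*\xi}$ near $-\infty$) is a legitimate, more elementary substitute.

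Two caveats. Your continuation step is phrased as a ``first contact point'' argument, but on the unbounded spatial domain such a point need not exist (the gap between $|W|$ and $\Omega$ may vanish only as $\xi\to-\infty$); you should phrase it as the paper does: take the maximal strip $[0,T]$ on which $\pm W\le\Omega$, verify the differential inequality on $[T,T+h]$ (where the delayed argument still lies in the certified past), and invoke the Phragm\`en--Lindel\"of principle there, which applies because $W$ is bounded and the potential $m$ is bounded above by $-D$. Also, your last step --- reading (\ref{convb}) off the bound $|W|\le K'e^{\gamma t}\bigl(e^{\lambda_*\xi}+Be^{\lambda\xi}\bigr)$ --- is not automatic, since the term $Be^{\lambda\xi}$ spoils the weight $e^{\lambda_*\xi}$ for $\xi$ large, and interpolating against mere boundedness of $W$ degrades the exponential rate; this is a feature shared with the paper's own proof (whose weight also equals $e^{\lambda x}$ on the right and whose barrier decays only like $e^{0.5\gamma t}$), so it does not put you below the paper's standard, but strictly speaking both arguments deliver the hybrid-weighted estimate, and (\ref{convb}) with the stated $\gamma$ follows only after adjusting the rate constant.
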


\begin{figure}[h] \label{F41}
\centering {\includegraphics[width=6.3cm]{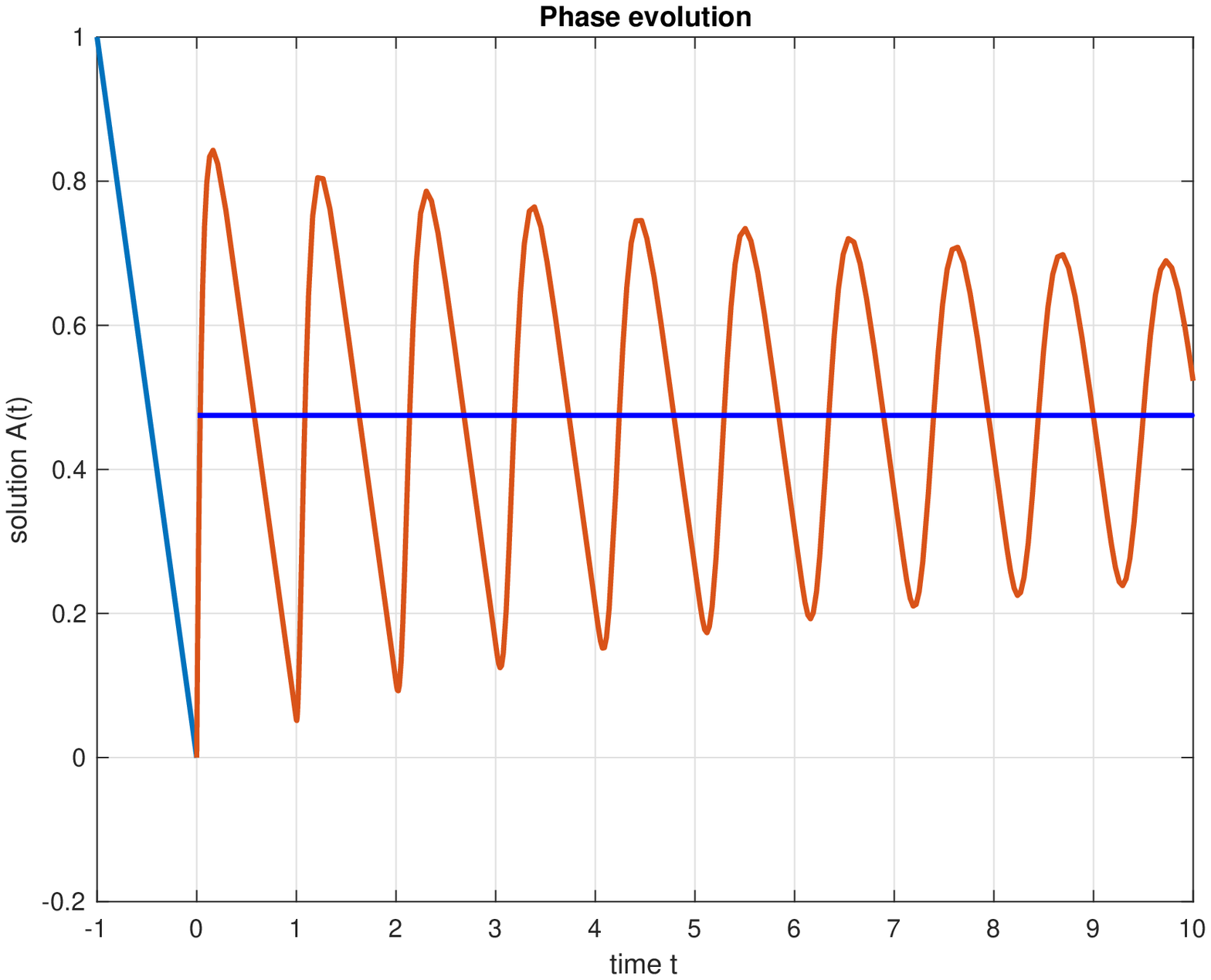}\includegraphics[width=7.3cm]{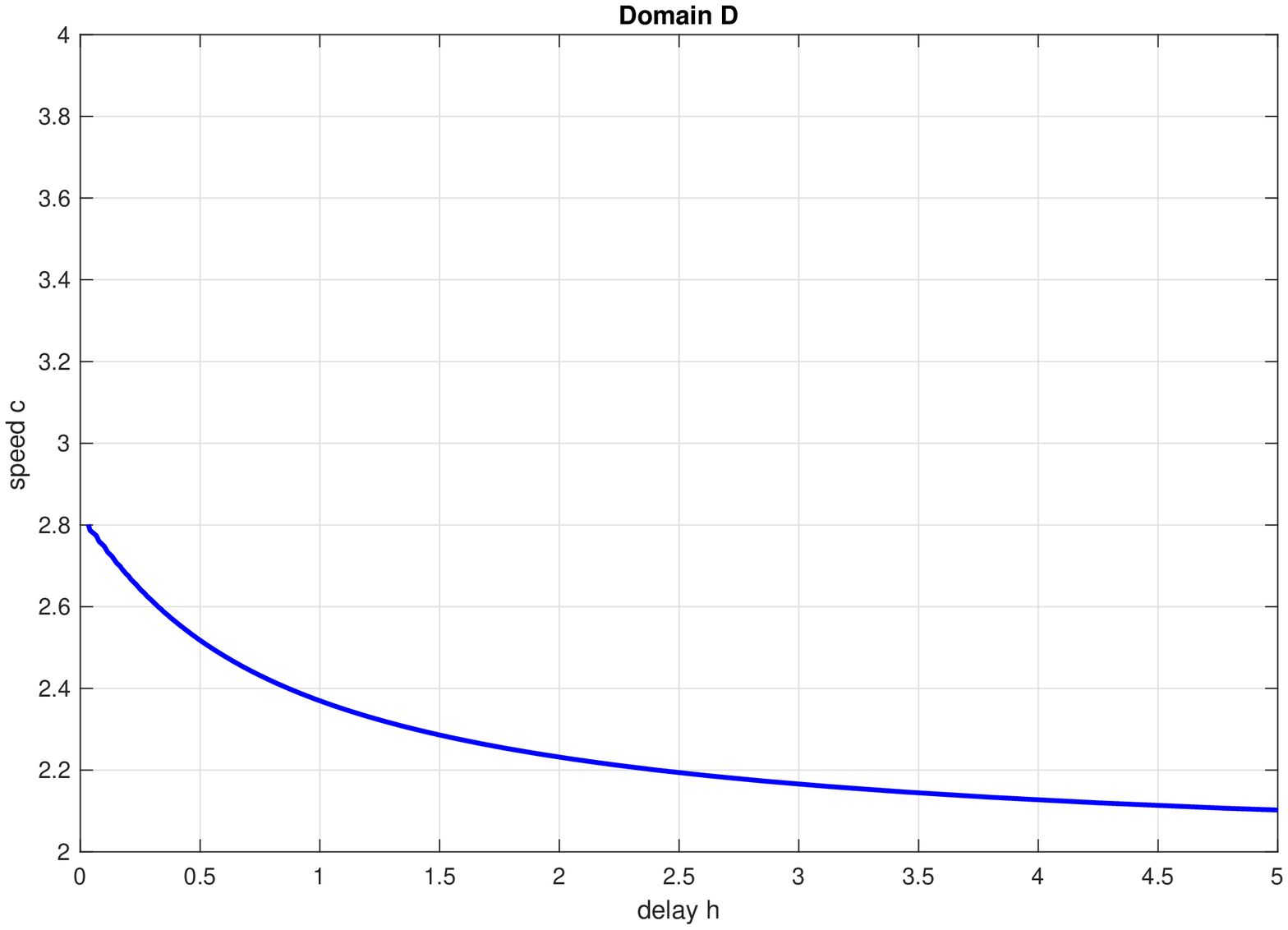}}
\caption{\hspace{0cm}  On the left: particular solution
of (\ref{al1}) with $h=1$,  $q=19$, $A_0(s)=-s$. Horizontal line is the limit value $A_\infty= 19/40$. On the right, the graph of  $c= c_\#(h)$  from  Corollary \ref{Th2C}.} 
\end{figure} 
\vspace{0mm}

Theorems \ref{Th1} and  \ref{Th1a}  say that the evolution of the initial phase deviation $\alpha_0(s)$ is determined by the  linear delay differential equation (\ref{al1}).  More detailed analysis of  the eigenvalues $z_j$ to (\ref{al1}) (see the Appendix) allows to have a better idea about the character of convergence of  $\alpha(t)$ to its limit $\alpha(+\infty)$. We claim that,  in the non-degenerate case  $hf_2(0,0)\not =0$, $\alpha_0(s)\not\equiv const$, generically  $\alpha(t)$ develops `rapid'  oscillations around $\alpha(+\infty)$ (these oscillations can be significant when $q$ is relatively large, see Figure 1). More precisely,  generically 
$\alpha(t)$ crosses two times the level  $\alpha(+\infty)$ on each half-open interval of the length $h$.  Indeed, an application of the Laplace transform to (\ref{al1}) yields the following representation 
$$
A(t) = A_\infty+ 2\Re (A_1e^{z_1t})(1 + o(1)), \ \mbox{where} $$
$$
A_1(1+h(z_1+q))=A_0(0)+qe^{-z_1h}\int_{-h}^0e^{-z_1s}A_0(s)ds 
$$
with $z_1=x_1+i y_1, y_1h \in (\pi, 2\pi)$, being the leading complex eigenvalue of (\ref{al1}). 

In particular,  $\alpha(t)$ is typically oscillating in the case of Nicholson's diffusive equation \cite{Chern,LvW,SOTRb,wlr}
\begin{equation}\label{MGE} 
u_t (t,x)=u_{xx}(t,x)- u(t,x) + b(u(t-h, x)), \  \  x\in{\mathbb R}, \quad b(u)=pue^{-u},\ p>1, 
\end{equation}
In such a case, $L_2= b'(0)=p$, $D=1$, and the solution $u(t,x),$  $t\geq 0, x \in \R,$ is bounded once its initial fragment  $u_0(s,x)$,  $s\in [-h,0], x \in \R,$ is bounded. In addition, the formulae (\ref{t}) hold for each $c > c_*$, where $c_*$ is the minimal speed of propagation in the model. In this way, we obtain  the following conclusion:  
\begin{cor} \label{Cor} Let $u=\phi(x+ct)$ be a non-critical wave for the Nicholson's diffusive equation. Denote by  $u(t,x)$  solution of the initial   problem $u(s,x)= u_0(s,x),$ $s\in [-h,0],$ for (\ref{MGE}) where
non-negative  function  $u_0$ satisfies 
\begin{eqnarray}\label{IC1M}
|u_0(s,x)-\phi(x+cs +\alpha_0(s))|\leq  Ke^{\lambda x},\qquad  (s,x)\in[-h,0]\times{\mathbb R},
\end{eqnarray}
for some $\lambda > \lambda_1$. 
Then there exist $\mu \in (\lambda_1,\lambda]$, $\gamma <0$ and  $Q\geq K$ such that 
\begin{eqnarray*}
|u(t,x)-\phi(x+ct+\alpha(t))|\leq Q e^{\mu (x+ct)} e^{\gamma t},\quad  x\in{\mathbb R}, \ t\geq -h.
\end{eqnarray*}
The function $\alpha(t)$ is converging at $+\infty$ and generically  develops `rapid'  oscillations around its limiting value  $\alpha(+\infty)$.
 \end{cor}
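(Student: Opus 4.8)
The plan is to deduce Corollary \ref{Cor} directly from Theorem \ref{Th1}, since Nicholson's equation falls squarely into the non-degenerate regime $f_2(0,0)>0$ treated there. First I would fix the structural constants. For $f(w,v)=-w+b(v)$ with $b(u)=pue^{-u}$ one has $f_1(0,0)=-1$ and $f_2(0,0)=b'(0)=p>0$, so the characteristic function is $\chi_0(z)=z^2-cz-1+pe^{-zch}$, and the discussion following (\ref{MGE}) (together with Remark \ref{R11}) supplies $D=1$, $L_2=p$ and the automatic global boundedness of $u(t,x)$ required by assumption (ii). Since the wave is non-critical, the asymptotics (\ref{t}) hold and $0<\lambda_1<\lambda_2$ are the two positive zeros of $\chi_0$.

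The crux is to verify that the hypotheses of Theorem \ref{Th1} are available, i.e. that (\ref{in1}) holds for a suitable pair $(\mu,\gamma)$. I would argue by continuity from the characteristic root. Writing $F(\lambda,\gamma)=\lambda^2-c\lambda-1-\gamma+pe^{-\lambda ch}e^{-\gamma h}$, one has $F(\lambda_1,0)=\chi_0(\lambda_1)=0$. Because $\chi_0(0)=p-1>0$ while $\chi_0$ has the two positive simple roots $\lambda_1<\lambda_2$, the graph must cross zero downward at $\lambda_1$, whence $\partial_\lambda F(\lambda_1,0)=\chi_0'(\lambda_1)<0$; also $\partial_\gamma F(\lambda_1,0)=-1-phe^{-\lambda_1 ch}<0$. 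Thus moving into the region $\lambda>\lambda_1$, $\gamma<0$ along a direction in which the increase of $\lambda$ dominates the decrease of $\gamma$ pushes $F$ strictly below zero; choosing $\gamma$ only slightly negative (hence above the decay rate $d$ from (\ref{be}), since $d<0$) and $\mu$ slightly above $\lambda_1$ but below $\min\{\lambda,2\lambda_1,\lambda_2\}$ yields $F(\mu,\gamma)<0$, i.e. (\ref{in1}) holds with exponent $\mu$. This produces the asserted $\mu\in(\lambda_1,\lambda]$ and $\gamma<0$.

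Next I would reduce the hypothesis (\ref{IC1M}), valid for the given (possibly large) $\lambda>\lambda_1$, to the form (\ref{IC1}) with the smaller exponent $\mu$. On $x\le 0$ one has $e^{\lambda x}\le e^{\mu x}$; on $x\ge 0$ the left-hand side $|u_0-\phi|$ is bounded by a constant $C_0$ (both $u_0$ and $\phi$ being bounded) and $C_0\le C_0e^{\mu x}$. Hence (\ref{IC1}) holds with $K$ replaced by $\max\{K,C_0\}$. With (\ref{in1}) and (\ref{IC1}) in hand, Theorem \ref{Th1} yields (\ref{conv}) for the exponent $\mu$. Substituting $x\mapsto x+ct+\alpha(t)$ in (\ref{conv}) and using that $\alpha(t)$ stays in $[\min_{[-h,0]}\alpha_0,\max_{[-h,0]}\alpha_0]$, so that $e^{\mu\alpha(t)}$ is uniformly bounded by some $C$, converts the estimate into $|u(t,x)-\phi(x+ct+\alpha(t))|\le Qe^{\mu(x+ct)}e^{\gamma t}$ with $Q=K'C\ge K$, which is exactly the claimed bound.

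Finally, the convergence $\alpha(+\infty)=a_*$ and the oscillation statement follow from the scalar equation (\ref{al1}) governing $A(t)=e^{\lambda_1\alpha(t)}$: convergence is (\ref{be}), and the limit value is $a_*$ by (\ref{as}). For the oscillations I would invoke the Laplace-transform representation $A(t)=A_\infty+2\Re(A_1e^{z_1t})(1+o(1))$ displayed before the Corollary, together with the Appendix's location of the leading complex eigenvalue $z_1=x_1+iy_1$ with $y_1h\in(\pi,2\pi)$. Since the oscillation period $2\pi/y_1$ satisfies $h/(2\pi/y_1)=y_1h/(2\pi)\in(1/2,1)$, the cosine factor crosses its mean twice on each half-open interval of length $h$, and the monotonicity of $\alpha\mapsto e^{\lambda_1\alpha}$ transfers these crossings to $\alpha(t)$ around $\alpha(+\infty)$; the word \emph{generically} accounts for exceptional data for which the amplitude $A_1$ vanishes. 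I expect the only genuinely delicate point to be the verification of (\ref{in1}): one must confirm the sign $\chi_0'(\lambda_1)<0$ and that the admissible window $\mu\in(\lambda_1,\min\{2\lambda_1,\lambda_2\})$, $\gamma\in(d,0)$ is nonempty, the remaining steps being substitutions and the quoted spectral facts.
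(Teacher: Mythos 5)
Your proposal is correct and follows essentially the same route as the paper: Corollary \ref{Cor} is presented there as a direct application of Theorem \ref{Th1}, using precisely the facts you assemble ($L_2=p$, $D=1$, automatic boundedness of solutions of (\ref{MGE}), the asymptotics (\ref{t}) for non-critical waves, and the spectral analysis of (\ref{al1}) from the Appendix). Your added details --- verifying (\ref{in1}) near $(\lambda_1,0)$ via the sign of $\chi_0'(\lambda_1)$ and continuity in $\gamma$, reducing (\ref{IC1M}) to (\ref{IC1}) with the smaller exponent $\mu$ using boundedness of $u_0$ and $\phi$, and converting (\ref{conv}) into the moving-frame estimate via boundedness of $\alpha(t)$ --- are exactly the steps the paper leaves implicit, and they are all sound.
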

Other aforementioned model, the KPP-Fisher delayed equation 
\begin{equation}\label{KPP} 
u_t (t,x)=u_{xx}(t,x)+ u(t,x)(1-u(t-h, x)), \ u= u_0(s,x), s\in [-h,0], \  x\in{\mathbb R}, 
\end{equation}
has the reaction term satisfying the equality $f_2(0,0)=0$.  
In view of Theorem  \ref{Th1a} and  Remark \ref{R11}, in the general case of non-monotone waves we have to consider the  domain ${\mathcal D}$ (presented on the right panel of Fig. 1 as a strict epigraph for the decreasing function $c=c_\#(h), \ $ $h \geq 0,$ $\ c_\#(0)=2\sqrt{2}, \ c_\#(+\infty)=2$), 
$$
{\mathcal D} = \left\{(h,c): \lambda^2-c\lambda +1+e^{-\lambda ch+ch} <0 \ \mbox{for some}\ \lambda \right\} = \left\{(h,c): c> c_\#(h), \ h \geq 0\right\},
$$
where $c=c_\#(h), \ h \geq 0,$ is defined implicitly by 
$$
 -2+\sqrt{c^4h^2-4c^2h^2+4}-c^2h^2\exp (ch\left(1-\frac c 2+\frac{1}{ch}-\sqrt{\frac{c^2}{4}+\frac{1}{c^2h^2}-1}\right))=0.
$$

Then Theorem \ref{Th1a}   yields  the following conclusion.
\begin{cor} \label{Th2C} Let $u=\phi(x+ct)$ be a traveling wave for KPP-Fisher delayed equation  (\ref{KPP}) where $(h, c) \in {\mathcal D}$. 
 Denote by  $u(t,x)$  solution of the initial   problem (\ref{KPP}) where
non-negative  function  $u_0$, satisfies,  for some $\lambda^* > \lambda_1$,
\begin{eqnarray}\label{IC1M}
|u_0(s,x)-\phi(x+cs +\alpha_0(s))|\leq  Ke^{\lambda^* x},\qquad  (s,x)\in[-h,0]\times{\mathbb R}.
\end{eqnarray}
Then there exist $\mu \in (\lambda_1,2\lambda_1)$, $\gamma <0$ and  $Q\geq K$ such that 
\begin{eqnarray}\label{conva}
|u(t,x)-\phi(x+ct+\alpha_0(0))|\leq Q e^{\mu (x+ct)} e^{\gamma t},\quad  x\in{\mathbb R}, \ t\geq 0.
\end{eqnarray}
 \end{cor}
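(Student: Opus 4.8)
The plan is to obtain Corollary \ref{Th2C} as a direct specialization of Theorem \ref{Th1a} to the KPP-Fisher reaction term, so that the bulk of the work is to check that every hypothesis of that theorem is met and then to rewrite its conclusion in the moving-frame form (\ref{conva}). First I would record the structural match: for (\ref{KPP}) we have $f(w,v)=w(1-v)=g(w)(\kappa-v)$ with $g(w)=w$, $\kappa=1$ and $g(0)=0$, so the standing assumption of Theorem \ref{Th1a} holds. Next, Remark \ref{R11} supplies the constants for the (possibly non-monotone) KPP-Fisher waves, namely $L_2=M_1=e^{ch}$, $M_3=+\infty$, and, because the data satisfy $u_0\ge 0=M_2$, the value $D=-1$. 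Since $(h,c)\in\mathcal D$ forces $c>c_\#(h)>2$, the characteristic function $\chi_0(z)=z^2-cz+1$ has two real simple positive roots $\lambda_1<\lambda_2$, and global existence and boundedness of $u$ on each strip is automatic for this model.

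The heart of the argument is to translate the membership $(h,c)\in\mathcal D$ into the analytic condition (\ref{in1}). Substituting $D=-1$ and $L_2=e^{ch}$, inequality (\ref{in1}) reads
$$
\lambda^2-c\lambda+1-\gamma+e^{ch-\lambda ch-\gamma h}<0,
$$
whose value at $\gamma=0$ is exactly the defining inequality of $\mathcal D$. Hence there is a $\lambda$ with $\lambda^2-c\lambda+1+e^{ch-\lambda ch}<0$; because the exponential term is positive while $p(\lambda):=\lambda^2-c\lambda+1$ is negative only on $(\lambda_1,\lambda_2)$, such a $\lambda$ necessarily lies in $(\lambda_1,\lambda_2)$. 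The left-hand side above is decreasing in $\gamma$ and continuous, so the strict inequality persists for all $\gamma<0$ with $|\gamma|$ small; this produces a threshold $\gamma_0<0$ such that (\ref{in1}) holds for every $\gamma\in(\gamma_0,0)$ with the chosen $\lambda$.

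I would then set $\lambda_*=\min\{\lambda^*,\lambda,2\lambda_1\}$ and check the auxiliary requirement $\lambda_*^2-c\lambda_*+1-\gamma<0$ of Theorem \ref{Th1a}, i.e. $p(\lambda_*)<\gamma$. Since $\lambda_*\le\lambda<\lambda_2$ and $\lambda_*>\lambda_1$ (all three competitors exceed $\lambda_1$), we have $\lambda_*\in(\lambda_1,\lambda_2)$ and thus $p(\lambda_*)<0$; choosing $\gamma$ in the nonempty window $(\,\max\{p(\lambda_*),\gamma_0\},0\,)$ satisfies (\ref{in1}) and the auxiliary inequality simultaneously. Theorem \ref{Th1a} then yields (\ref{convb}); performing the substitution $x\mapsto x-ct-\alpha_0(0)$ turns the weight $e^{-\lambda_* x}$ into the factor $e^{\lambda_*(x+ct)}e^{\lambda_*\alpha_0(0)}$ on the right and converts (\ref{convb}) into (\ref{conva}) with $\mu=\lambda_*$, the same $\gamma<0$, and $Q=\max\{K,K'e^{\lambda_*\alpha_0(0)}\}\ge K$, restricted to $t\ge0$.

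The step I expect to be most delicate is securing the strict inclusion $\mu=\lambda_*\in(\lambda_1,2\lambda_1)$, as opposed to the automatic $\lambda_*\le 2\lambda_1$. Writing $p(2\lambda_1)=2\lambda_1^2-1$ and using the product relation $\lambda_1\lambda_2=1$, one distinguishes two regimes. If $\lambda_1\ge 1/\sqrt2$, then $\lambda_2\le 2\lambda_1$, and $\lambda_*\le\lambda<\lambda_2\le 2\lambda_1$ gives strictness for free. If $\lambda_1<1/\sqrt2$, then $2\lambda_1<\lambda_2$ and the equality $\lambda_*=2\lambda_1$ can a priori occur; here one must either exhibit an admissible $\lambda$ obeying the $\mathcal D$-inequality below $2\lambda_1$, or exploit that $p(2\lambda_1)<0$ to keep the estimate valid after a slight reduction of the exponent, which is harmless in the front region $x+ct\le 0$ and costs only an enlargement of $Q$ where $x+ct>0$ (using the eventual boundedness of $u$ that (\ref{conv}) itself provides). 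Locating the admissible $\lambda$-interval relative to $2\lambda_1$ is the only genuinely model-specific computation; everything else is bookkeeping built on top of Theorem \ref{Th1a}.
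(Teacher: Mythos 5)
Your overall route is the intended one: the paper offers no separate proof of Corollary \ref{Th2C}, which is meant to follow by specializing Theorem \ref{Th1a} with the constants of Remark \ref{R11} ($g(w)=w$, $\kappa=1$, $L_2=e^{ch}$, $D=-1$) and by observing that membership in $\mathcal D$ is exactly (\ref{in1}) evaluated at $\gamma=0$, so that (\ref{in1}) persists for small $|\gamma|$, $\gamma<0$. Those steps, the choice of $\gamma$ above $\max\{p(\lambda_*),\gamma_0\}$ where $p(\lambda):=\lambda^2-c\lambda+1$, and the final change of variables are all correct (one harmless slip: the left side of (\ref{in1}) \emph{increases} as $\gamma$ decreases, so monotonicity argues against you and it is continuity alone that saves the step, as you in fact use).

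The genuine gap is exactly the point you flagged and then left open: securing $\mu<2\lambda_1$ strictly, and neither of your two escape routes works. The first ("exhibit an admissible $\lambda$ below $2\lambda_1$") is impossible in general: by convexity of $F(\lambda)=\lambda^2-c\lambda+1+e^{(1-\lambda)ch}$ the set $\{F<0\}$ is an interval, and it can lie entirely to the right of $2\lambda_1$. For instance, with $h=1$, $c=10$ one has $\lambda_1\approx 0.101$ and $F(2)\approx -15+e^{-10}<0$, so $(1,10)\in\mathcal D$, yet $F(2\lambda_1)\approx -0.98+e^{7.98}>0$; hence every admissible $\lambda$ exceeds $2\lambda_1$, and if also $\lambda^*\geq 2\lambda_1$ your $\lambda_*$ equals $2\lambda_1$ exactly, missing the open interval. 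The second route is unjustified: (\ref{convb}) gives no boundedness of $u-\phi$ in the region $x+ct>0$ (the weighted bound grows there like $e^{\lambda_*(x+ct)}$), global boundedness of $u$ is not part of the paper's KPP-Fisher framework ($M_3=+\infty$ in Remark \ref{R11}), and control of the solution ahead of the front is precisely the open problem b) stated at the end of Section 1, so your interpolation-type reduction of the exponent has nothing to stand on. The correct repair is cheap and acts on the other parameter: shrink $\lambda^*$, not $\lambda$. Since $u_0$ is bounded (standing assumption (i)) and $0\leq\phi\leq e^{ch}$, hypothesis (\ref{IC1M}) with exponent $\lambda^*$ implies the same estimate with any exponent $\lambda^{**}\in(\lambda_1,\min\{\lambda^*,2\lambda_1\})$ and the constant $\max\{K,\sup u_0+e^{ch}\}$: for $x\leq 0$ because $e^{\lambda^{**}x}\geq e^{\lambda^*x}$, and for $x\geq 0$ because $e^{\lambda^{**}x}\geq 1$. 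After this reduction $\lambda_*=\min\{\lambda^{**},\lambda,2\lambda_1\}=\lambda^{**}\wedge\lambda\in(\lambda_1,2\lambda_1)$ automatically, the auxiliary inequality $p(\lambda_*)<\gamma$ remains achievable since $p(\lambda_*)<0$, and the rest of your argument closes the proof.
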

In this way, on the base of an alternative  approach,   Theorem  \ref{Th1a}  and Corollary \ref{Th2C} improve the stability result \cite[Theorem 3]{BS} in the following two aspects: a) in Corollary \ref{Th2C}, the 
initial phase function $\alpha_0(s), s \in [-h,0]$ is not necessarily constant; b) even if all mentioned results use the same domain for the admissible parameters $(h, c)$,   \cite[Theorem 3]{BS}  assumes additionally that the exponent $\lambda^*$ in (\ref{IC1M}) should be larger than some minimal value, specific  for each pair $(h,c)$.  Observe that for the delayed KPP-Fisher equation it is still not clear whether  a) the domain of all admissible parameters can be extended to the quarter-plane $c \geq 2, h \geq 0$; b)  the estimate (\ref{conva}) with the bounded weight $\min\{e^{\mu x},1\}$ is true.   
\section{Proof of Theorem \ref{Th1}} \label{Section 2}
The estimation of the auxiliary function 
$$P=f(\phi(x+\alpha(t)), \phi(x-ch+\alpha(t)))-f(\phi(x+\alpha(t)), \phi(x-ch+\alpha(t-h)))+\alpha'(t) \phi'(x+\alpha(t))$$
 is instrumental  for proving our first main result. 
\begin{lemma}\label{L2} Assume all  conditions of Theorem \ref{Th1}. Let  $q$ and $d$ be defined by (\ref{as}) and  (\ref{be}),
respectively.  Then  $|P(t,x)| \leq q_0\, e^{\lambda x}\, e^{d t}$  for some $q_0\geq 0$ and all $x\in{\mathbb R}$, $t\geq 0$.   
\end{lemma}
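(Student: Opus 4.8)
The plan is to read $P(t,x)$ as the residual produced when the phase--modulated wave $v(t,x)=\phi(x+ct+\alpha(t))$ is substituted into (\ref{E}). Writing $\zeta=x+\alpha(t)$ and using the profile equation $\phi''(\zeta)-c\phi'(\zeta)+f(\phi(\zeta),\phi(\zeta-ch))=0$, one checks immediately that $P$ is exactly $v_t-v_{xx}-f(v,v(t-h,\cdot))$ expressed in the moving frame, with the delay producing the gap $\phi(\zeta-ch)$ versus $\phi(\zeta-ch-\Delta(t))$, where $\Delta(t):=\alpha(t)-\alpha(t-h)$. Before estimating $P$ I would record the two quantities that carry the temporal decay, namely $\alpha'(t)$ and $\Delta(t)$. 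Since $A=e^{\lambda_1\alpha}$ solves (\ref{al1}), stays bounded away from $0$, and obeys $|A(t)-A_\infty|\le e^{dt}$ by (\ref{be}), the relation $\alpha'=A'/(\lambda_1A)$ and the mean value theorem give $|\alpha'(t)|\le c_1e^{dt}$ and $|\Delta(t)|\le c_2e^{dt}$ for $t\ge 0$; these supply the factor $e^{dt}$ in the conclusion.

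Next I would split $\R$ into a right part $x\ge -R$ and a far field $x\le -R$, with $R$ large but fixed. On $x\ge -R$ the weight $e^{\lambda x}$ is bounded below, so it suffices to bound $|P|$ by a constant multiple of $e^{dt}$: the first difference is at most $L_2\,|\phi(x-ch+\alpha(t))-\phi(x-ch+\alpha(t-h))|\le L_2\|\phi'\|_\infty|\Delta(t)|$, and the last term is at most $\|\phi'\|_\infty|\alpha'(t)|$, both $O(e^{dt})$. Hence $|P|\le c_3\,e^{\lambda R}\,e^{\lambda x}e^{dt}$ on this region.

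The heart of the proof is the far field, where I would insert the asymptotics (\ref{t}) for $\phi,\phi'$ and the first--order Taylor expansion of $f$ at $(0,0)$. The $f_1(0,0)\phi(\zeta)$ contributions cancel between the two copies of $f$, and to leading order the coefficient of $e^{\lambda_1 x}$ in $P$ equals $A'(t)-f_2(0,0)e^{-\lambda_1ch}\bigl(A(t-h)-A(t)\bigr)$, since the first difference contributes $f_2(0,0)e^{-\lambda_1ch}\bigl(A(t)-A(t-h)\bigr)e^{\lambda_1 x}$ and the term $\alpha'(t)\phi'(\zeta)$ contributes $A'(t)e^{\lambda_1 x}$. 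This coefficient vanishes identically because $A=e^{\lambda_1\alpha}$ is precisely the leading coefficient of the solution near $x=-\infty$ and its evolution equation is (\ref{al1}); this is exactly why (\ref{al1}) is the correct equation for the phase. What survives consists of genuinely higher--order pieces: the $r_1,r_2$ corrections of (\ref{t}), of size $O(e^{(\lambda_1+\sigma)x}|\Delta|)$, and the quadratic Taylor remainder of $f$ (where $f_2(\phi,\cdot)-f_2(0,0)=O(e^{\lambda_1 x})$), of size $O(e^{2\lambda_1 x}|\Delta|)$. Under the range $\lambda<\min\{2\lambda_1,\lambda_2\}$ of Theorem \ref{Th1} (and with $\sigma$ in (\ref{t}) taken so that $\lambda_1+\sigma\ge\min\{2\lambda_1,\lambda_2\}$) each surviving exponent strictly exceeds $\lambda$, so these leftovers are dominated by $e^{\lambda x}$, while $|\Delta|=O(e^{dt})$ provides the temporal factor; this yields $|P|\le c_4\,e^{\lambda x}e^{dt}$ on the far field, and $q_0=\max\{c_3e^{\lambda R},c_4\}$ completes the estimate.

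I expect the genuine obstacle to be the exact cancellation of the $O(e^{\lambda_1 x})$ part of $P$: one must verify that $A=e^{\lambda_1\alpha}$ solves precisely (\ref{al1}), so that the slowly decaying mode $e^{\lambda_1 x}$ is annihilated rather than merely reduced in size. If this cancellation failed, $P$ would only decay like $e^{\lambda_1 x}$ and the weighted bound with $\lambda>\lambda_1$ would be false; thus the whole lemma hinges on matching the coefficient in the leading--coefficient equation. The remaining bookkeeping --- isolating the $r_1,r_2$ terms of (\ref{t}) from the quadratic remainder of $f$, and checking uniformly in $t$ that every leftover carries an exponent above $\lambda$ together with a factor $|\alpha'|+|\Delta|=O(e^{dt})$ --- is routine but is exactly where the hypothesis $\lambda\in(\lambda_1,\min\{2\lambda_1,\lambda_2\})$ is used.
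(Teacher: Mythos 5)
Your proposal is correct and follows essentially the same route as the paper's own proof: the same derivation of the temporal decay $|\alpha'(t)|,\ |\alpha(t)-\alpha(t-h)| = O(e^{dt})$ from (\ref{be}), the same exact cancellation of the $e^{\lambda_1 x}$-coefficient via the phase equation (\ref{al1}) (which, as you correctly flag, is the crux of the lemma), and the same bookkeeping for the leftovers --- the $r_1,r_2$ corrections of (\ref{t}) and the $f_2(\phi,\cdot)-f_2(0,0)$ remainder of size $O(e^{2\lambda_1 x}e^{dt})$, both controlled by $e^{\lambda x}e^{dt}$ since $\lambda<\min\{2\lambda_1,\lambda_2\}$. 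The only cosmetic difference is your explicit near-field/far-field splitting in $x$, which the paper absorbs into uniform two-sided bounds on $\phi$ and $\phi'$; your tacit requirement that $\lambda_1+\sigma$ dominate $\lambda$ is likewise present (unstated) in the paper's claim $B(z)=O(e^{\lambda z})$.
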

\begin{proof} We have that 
$$
P(t, x)=\left(\alpha'(t) \phi'(x+\alpha(t))+
f_2(0,0)[\phi(x-ch+\alpha(t))-\phi(x-ch+\alpha(t-h))]\right)+$$
$$\rho[\phi(x-ch+\alpha(t))-\phi(x-ch+\alpha(t-h))] =:{\mathcal P}_1+{\mathcal P}_2,
$$
where
$$
\rho= f_2(\phi(x+\alpha(t)), \theta(x,t))-f_2(0,0)
$$ 
with $\theta(x,t)$ being some point  between $\phi(x-ch+\alpha(t))$ and $\phi(x-ch+\alpha(t-h))$. 
Since $|f_2(u,v)-f_2(0,0)| \leq C(|u|+|v|)$ on the bounded subset  $[0,M_1]^2\subset \R_+^2$
 (in this proof, we are using $C$ as a generic positive constant), we conclude that 
 $$|{\mathcal P_2}|= |\rho\left[\phi(x-ch+\alpha(t))-\phi(x-ch+\alpha(t-h))\right]| \leq $$
$$Ce^{\lambda_1 x}|\phi(x-ch+\alpha(t))-\phi(x-ch+\alpha(t-h))| \leq 
Ce^{\lambda_1 x}e^{\lambda_1x +dt} \leq Ce^{\lambda x +dt}, \ x \in {\mathbb R}, \ t \geq 0.
$$
Next, consider   $B(z)= \phi(z)-e^{\lambda_1 z}$, clearly $B(z)= O(e^{\lambda z})$ at $z=-\infty$. Then (\ref{t}) and (\ref{al1}) imply that 
$$
{\mathcal P}_2= f_2(0,0)[\phi(x-ch+\alpha(t))-\phi(x-ch+\alpha(t-h))]+{\alpha}'(t) \phi'(x+\alpha(t))= $$
$$ e^{\lambda_1 x} [f_2(0,0) (e^{\lambda_1(\alpha(t)-ch)}-e^{\lambda_1(\alpha(t-h)-ch)})+\lambda_1{\alpha}'(t) e^{\lambda_1 \alpha(t)}]
+ $$
$$f_2(0,0)[B(x-ch+\alpha(t))-B(x-ch+\alpha(t-h))]+{\alpha}'(t)r_2(x+\alpha(t)) e^{(\lambda_1+\sigma)(x+\alpha(t))}
=$$
$$f_2(0,0)[B(x-ch+\alpha(t))-B(x-ch+\alpha(t-h))]+{\alpha}'(t)r_2(x+\alpha(t)) e^{(\lambda_1+\sigma)(x+\alpha(t))}.
$$
\vspace{2mm}
Next, we have that 
$$|\alpha(t)-\alpha(t-h)| = \lambda_1^{-1}\left|\ln\frac{A(t)}{A(t-h)}\right|
\leq C |A(t)-A(t-h)| \leq  Ce^{d t}, t \geq 0.$$
As a consequence, 
$$|B(x-ch+\alpha(t))-B(x-ch+\alpha(t-h))|\leq Ce^{\lambda x+d t},\qquad x\in {\mathbb R}, \ t\geq 0.$$   
In this way, since  $\alpha'(t)= \lambda_1^{-1}A'(t)/A(t) = O(e^{dt}),\ t \to +\infty,$ we find that 
$|{\mathcal P}_2| \leq Ce^{\lambda x+d t}, \ x\in {\mathbb R}, \, t\geq 0.
$
The obtained estimates for $|{\mathcal P}_1|$ and $|{\mathcal P}_2|$ show that, for some positive constant $q_0$,  $$|P(t,x)| \leq q_0\, e^{\lambda x+d t}, \ x \in {\mathbb R}, \ t >0.$$
This completes the proof of Lemma \ref{L2}.  
\end{proof}
\begin{proof}[Proof of Theorem \ref{Th1}]  Set
$v(t,x)= u(t,x-ct)$. Then the problem (\ref{E}), (\ref{IC1}) takes the form
$$
0=v_{xx}(t,x)- cv_x(t,x) - v_t (t,x)+f(v(t, x),v(t-h, x-ch)), \quad t>0, \  x\in{\mathbb R},
$$
$$
|v_0(s,x)-\phi(x+\alpha_0(s))|\leq  Ke^{ \lambda x},\qquad  (s,x)\in[-h,0]\times{\mathbb R}.
$$

Take $q_0$ as in Lemma \ref{L2} and let $Q\geq K$ be sufficiently large to satisfy 
$$
-\lambda^2+c\lambda+D-L_2\, e^{-\lambda ch}e^{-\gamma h}- \frac{q_0}{Q}+\gamma> 0. 
$$
\noindent For $\phi(x+\alpha(t))\not= v(t, x)$,  set  $$d(t,x):=\frac{f(\phi(x+\alpha(t)), v(t-h, x-ch))-f(v(t, x), v(t-h, x-ch))}{\phi(x+\alpha(t))- v(t, x)}, $$
and for  $\phi(x+\alpha(t))= v(t, x)$,  set $d(t,x):=f_1(\phi(x+\alpha(t)), v(t-h, x-ch))$. 

Then consider the linear differential operator $$\mathcal{L}v=v_{xx}-cv_x+d(t,x)v-v_t $$
and the functions 
 $$\delta_{\pm}(t,x)=\pm[v(t, x)-\phi(x+\alpha(t))]-Qe^{\gamma t}e^{\lambda x}.$$  
 By our assumptions $\delta_{\pm}(t,x)\leq 0$ for   $(t,x)\in[-h,0]\times{\mathbb R}$. Let $\Pi= [-h,T]\times{\mathbb R}$, $T \in \R_+\cup \{+\infty\}$,
 be the maximal strip where $\delta_{\pm}(t,x)\leq 0$.  Clearly, inequality (\ref{conv}) is satisfied for all $(t,x) \in \Pi$. Theorem \ref{Th1} will be proved if we establish that $T =+\infty$. 
 Suppose for a moment that $T$ is finite. Then 
 we find that, for all $t \in [T,T+h]$, $x \in \R$, 
$$
(\mathcal{L}\,\delta_{\pm})(t, x)=\{\pm(\mathcal{L}\,v)(t, x)\mp(\mathcal{L}\,\phi(\cdot +\alpha))(t, x)\}-Q\, (\mathcal{L} e^{\gamma \cdot}e^{\lambda \cdot})(t,x)=$$
$$
\pm\Big\{f(\phi(x+\alpha(t)), \phi(x-ch+\alpha(t)))-f(v(t, x), v(t-h, x-ch))+\alpha'(t) \phi'(x+\alpha(t))$$
$$-d(t, x)[\phi(x+\alpha(t))-v(t,x))]\Big\} + Q e^{\lambda x}e^{\gamma t}[-\lambda^2+c\lambda-d(t, x)+\gamma]=$$
$$\pm[f(\phi(x+\alpha(t)), \phi(x-ch+\alpha(t)))-f(\phi(x+\alpha(t)), \phi(x-ch+\alpha(t-h)))+\alpha'(t) \phi'(x+\alpha(t))]$$
 $$\pm\{f(\phi(x+\alpha(t)), \phi(x-ch+\alpha(t-h)))-f(v(t, x),  v(t-h, x-ch))- d(t, x)[\phi(x+\alpha(t))-v(t,x)]\}$$
$$ +Qe^{\lambda x}e^{\gamma t}[-\lambda^2+c\lambda-d(t, x)+\gamma]\geq $$
$$ -q_0\, e^{\lambda x} e^{\gamma t}\pm[f(\phi(x+\alpha(t)), \phi(x-ch+\alpha(t-h)))-f(\phi(x+\alpha(t)), v(t-h, x-ch))]+$$ 
$$Qe^{\lambda x}e^{\gamma t}[-\lambda^2+c\lambda-d(t, x)+\gamma]\geq $$
$$ -q_0\, e^{\lambda x} e^{\gamma t}-L_2 \, |\phi(x-ch+\alpha(t-h))-v(t-h, x-ch)|+Qe^{\lambda x}e^{\gamma t}[-\lambda^2+c\lambda-d(t, x)+\gamma]=$$
$$Q e^{\lambda x}e^{\gamma t}[-\frac{q_0}{Q}-L_2e^{-\lambda ch}e^{-\gamma h}-\lambda^2+c\lambda+D+\gamma]\geq 0. 
$$
Invoking the Phragm\`en-Lindel\"of principle at this stage, we conclude that  also  $\delta_{\pm}(t, x)\leq 0$ for all $t\in[T,T+h]$, $x \in \R$. This contradicts the maximality   of the strip $\Pi$ and completes the proof of the theorem. 
\end{proof}

\section{Proof of Theorem \ref{Th1a}}  The change of variables 
$v(t,x)= u(t,x-ct)$ transforms  (\ref{E}), (\ref{IC1a}) into 
$$
0=v_{xx}(t,x)- cv_x(t,x) - v_t (t,x)+f(v(t, x),v(t-h, x-ch)), \quad t>0, \  x\in{\mathbb R},
$$
$$
|v_0(s,x)-\phi(x+\alpha_0(s))|\leq  Ke^{ \lambda^* x},\qquad  (s,x)\in[-h,0]\times{\mathbb R}.
$$
Without loss of generality, we  can assume that  $\alpha_0(0)=0$. 
Our first goal is to obtain a similar estimate for  $t\in [0,h]$: we will prove that, for some $K_1 \geq K$,  
\begin{eqnarray}\label{IC1aPP}
|v(t,x)-\phi(x)|\leq  K_1e^{ \lambda_* x},\qquad  (t,x)\in[0,h]\times{\mathbb R}.
\end{eqnarray}

Indeed,  the difference $w(t,x)= v(t,x)-\phi(x)$ solves 
the following linear inhomogeneous equation 
\begin{eqnarray*}
w_t(t, x)&=& w_{xx}(t,x)- cw_x(x,t) + a(t,x)w(t, x) +b(t,x),  \  \ t\in[0,h], \, x\in\R, \\
w(s, x)&=& w_0(s,x):=v_0(s,x)-\phi(x), \qquad (s, x)\in[-h, 0]\times\R,
\end{eqnarray*}
where 
$$
a(t,x)=\int_0^1f_1(sv(t,x)+(1-s)\phi(x), sv(t-h,x-ch)+(1-s)\phi(x-ch))ds
$$
$$b(t,x)= -w(t-h,x-ch)\int_0^1g(sv(t,x)+(1-s)\phi(x))ds$$
are Lipschitz  continuous functions. 
Invoking the standard representation formula for the solution of the above Cauchy problem (see \cite[Theorem 12]{AF}), we find that, for
$(t, x)\in[0, h]\times\R$ it holds 
\begin{eqnarray*}
w(t, x)=  \int_{\R}\Gamma(t, x; 0, \xi )w(0, \xi) d\xi + \int_0 ^t \int_{\R}\Gamma(t, x; \tau, \xi )b(\tau,\xi) d\xi d\tau, 
\end{eqnarray*}
where  $\Gamma(t, x; \tau, \xi )$ is the fundamental solution for the respective homogeneous equation. Using the estimates (for the first one, see  inequality (6.12) on p. 24 of \cite{AF})
$$
|\Gamma(t, x; \tau, \xi )| \leq\frac{C}{\sqrt{t-\tau}} e^{-\frac{k(x-\xi)^2}{4(t-\tau)}},\quad  x, \xi \in {\mathbb R}, \ t > \tau, \ t, \tau \in [0,h], 
$$
$$
|b(\tau,\xi)| +
|w(0, \xi)| \leq  Ce^{\lambda_* \xi}, \qquad  (\tau,\xi)\in[0,h]\times{\mathbb R},
$$
where $C>0$ and $k\in (0,1)$ are some constants, we obtain, with some $C'>0$,  that 
$$
e^{-\lambda_* x} |\int_{\R} \Gamma(t, x; 0, \xi)\, w(0, \xi) d\xi|\leq  \int_{\R} \frac{C^2}{\sqrt{t}} e^{-\frac{k(x-\xi)^2}{4t}} e^{-\lambda_*(x-\xi)}d\xi= $$
$$
=2C^2 \int_{\R} e^{-ks^2} \, e^{2\lambda_*s \sqrt{t}} ds \leq C', \quad t \in [0,h], \ x \in \R,$$
$$
e^{-\lambda_*x} \left| \int_0 ^t  \int_{\R}\Gamma(t, x; \tau, \xi ) b(\tau, \xi) d\xi  d\tau \right|\leq 
e^{-\lambda_* x} \left| \int_0 ^t  \int_{\R}\frac{C^2}{\sqrt{t-\tau}} e^{-\frac{k(x-\xi)^2}{4(t-\tau)}}e^{\lambda_*\xi}d\xi  d\tau \right|=
$$
$$
| \int_0 ^t  \int_{\R}\frac{C^2}{\sqrt{t-\tau}} e^{-\frac{k\xi^2}{4(t-\tau)}}e^{\lambda_*\xi}d\xi  d\tau| < C', \quad t \in [0,h], \ x \in \R. 
$$
Then (\ref{IC1aPP}) follows from these inequalities.  

Next, take a sufficiently large  negative number $x_*$ to have 
$$
|g(\phi(x))| < -0.25\gamma e^{0.5\gamma h} \quad \mbox{for all} \ x \leq x_*. 
$$
Consider a $C^\infty$-smooth non-decreasing function $\lambda: \R \to \R$ defined, for some appropriate $\theta >ch,$ as
$\lambda(x)= \lambda_*x$ for $x \leq x_*-\theta$ and  $\lambda(x)= \lambda x$ for $x \geq x_*-ch$ and $\lambda'(x) \in [\lambda_*,\lambda]$, $\lambda''(x) < -\gamma/4$. 
Clearly, we can choose $K_2>K_1$ in such a way that the functions 
 $$\rho_{\pm}(t,x)=\pm[v(t, x)-\phi(x)]-K_2e^{0.5\gamma (t-h)}e^{\lambda(x)}$$  
satisfy $\rho_{\pm}(t,x)\leq 0$ for   $(t,x)\in[0,h]\times{\mathbb R}$. 
 
\noindent For $\phi(x)\not= v(t, x)$,  set  $$m(t,x):=\frac{f(\phi(x), v(t-h, x-ch))-f(v(t, x), v(t-h, x-ch))}{\phi(x)- v(t, x)}, $$
and for  $\phi(x)= v(t, x)$,  set $m(t,x):=f_1(\phi(x), v(t-h, x-ch))$. 

Then consider the linear differential operator $$\mathcal{L}v=v_{xx}-cv_x+m(t,x)v-v_t $$
and let $\Pi= [0,T]\times{\mathbb R}$, $T \in [h, +\infty]$
 be the maximal strip where $\rho_{\pm}(t,x)\leq 0$. 
 Suppose for a moment that $T$ is finite. Then 
 we find that, for all $t\in[T,T+h]$, $x \in \R$, 
$$
(\mathcal{L}\,\rho_{\pm})(t, x)=\{\pm(\mathcal{L}\,v)(t, x)\mp(\mathcal{L}\,\phi(\cdot))(t, x)\}-K_2e^{-0.5\gamma h}\, (\mathcal{L} e^{0.5\gamma \cdot}e^{\lambda(\cdot)})(t,x)=$$
 $$\pm\{f(\phi(x), \phi(x-ch))-f(v(t, x),  v(t-h, x-ch))- m(t, x)[\phi(x)-v(t,x)]\}$$
$$ + K_2 e^{\lambda (x)}e^{0.5\gamma (t-h)}[-\lambda''(x) - (\lambda'(x))^2+c\lambda'(x)-m(t, x)+0.5\gamma]\geq $$
$$ -|g(\phi(x))| |\phi(x-ch)-v(t-h, x-ch)|+$$
$$K_2e^{\lambda (x)}e^{0.5\gamma (t-h)}[-\lambda''(x) - (\lambda'(x))^2+c\lambda'(x)+D+0.5\gamma]= :{\mathcal E}(t,x). $$
Now, if $x \leq x_*$ then
$$
{\mathcal E}(t,x)\geq -|g(\phi(x))|K_2e^{0.5\gamma (t-2h)}e^{\lambda(x-ch)}+$$
$$K_2 e^{\lambda (x)}e^{0.5\gamma (t-h)}[-\lambda''(x) - (\lambda'(x))^2+c\lambda'(x)+D+0.5\gamma]\geq 
$$
$$K_2e^{\lambda (x)} e^{0.5\gamma (t-h)}[\gamma - (\lambda'(x))^2+c\lambda'(x)+D]>0. 
$$
On the other hand, if $x \geq x_*$ then $${\mathcal E}(t,x)\geq 
K_2e^{\lambda(x)} e^{0.5\gamma (t-h)}[ -L_2e^{-0.5\gamma h}e^{-\lambda ch}-\lambda^2+c\lambda+D+\gamma]>0.
$$

Invoking the Phragm\`en-Lindel\"of principle at this stage, we conclude that  also  $\delta_{\pm}(t, x)\leq 0$ for all $t\in[T,T+h]$, $x \in \R$. This contradicts the maximality   of the strip $\Pi$ and completes the proof of the theorem. \qed
\section*{Appendix} 
Here we analyse the zeros of the entire function $z+q - qe^{-zh}$, where  $q, h$ are positive parameters. It is convenient to include the case 
$q=+\infty$ by introducing $\epsilon =1/q\geq 0$ and analysing $\chi(z)=\epsilon z+1 - e^{-zh}$. 
Clearly, $\chi$ has only one  real 
zero $z=0$. Thus $\chi'(z_j)= \epsilon+h(\epsilon z_j+1)\not=0$ at each zero $z_j$ of $\chi(z)$ so that  $z_j=z_j(\epsilon)$ is a smooth function of $\epsilon \geq 0$.  
Set $z_j=x+iy$ with $y>0$, then  
$\epsilon x +1 = e^{-xh}\cos(yh)$, $\epsilon y= -e^{-xh}\sin(yh)$ and therefore 
the unique zero of $\chi(z)$ with non-negative real part is $z=0$. Moreover, the equality $\epsilon y=-e^{-xh}\sin(yh)$  shows that $yh \in (\pi+ 2\pi k, 2\pi+ 2\pi k)$,  $k\in \N \cup \{0\},$ whenever $\epsilon >0$.  Next,  $1 =e^{-z_j(0)h}$ implies that $z_j(0)h = i(\pi  + 2\pi k)$. Since  the relation $z_j(\epsilon_*-) = \infty$ cannot happen for a finite $\epsilon_*>0$, we conclude that $z_j(\epsilon)\in  \{z: h\Im z \in (\pi+ 2\pi k, 2\pi+ 2\pi k), \Re z <0\}$ is well defined for every $\epsilon >0$. Consequently, the original function $z+q - qe^{-zh}$ has a unique zero $z_k$ at each horizontal strip $(\pi+ 2\pi k)/h < \Im z<  (2\pi+ 2\pi k)/h$ while its complete list of zeros is given by 
$\{z_0=0, z_k, \bar z_k, k \in \N\}$.   Since $|z_j+q| = qe^{-\Re z_j h}$ we conclude that $\Re z_j$ is a strictly decreasing sequence converging to $-\infty$. 

\bibliographystyle{amsplain}

\end{document}